\providecommand{\U}[1]{\protect\rule{.1in}{.1in}}
\newtheorem{theorem}{Theorem}
\newtheorem{acknowledgement}[theorem]{Acknowledgement}
\newtheorem{lemma}[theorem]{Lemma}
\newtheorem{proposition}[theorem]{Proposition}
\newtheorem{remark}[theorem]{Remark}
\newenvironment{proof}[1][Proof]{\noindent\textbf{#1.} }{\ \rule{0.5em}{0.5em}}
\let\pdfoutput=\undefined\fi
\begin{document}

\title{Algebraic Equation and Quadratic\ Differential Related to Generalized Bessel
Polynomials with Varying Parameters}
\author{Mohamed Jalel Atia and Faouzi Thabet}
\maketitle

\begin{abstract}
The limiting set of zeros of generalized Bessel polynomials $B_{n}^{(\alpha)}$
with varying parameters depending on the degree $n$ cluster in a curve on the
complex plane, which is a finite critical trajectory of a quadratic
differential in the form $\lambda^{2}\frac{\left(  z-a\right)  \left(
z-b\right)  }{z^{4}}dz^{2}.$ The motivation of this paper is the description
of the critical graphs of these quadratic differentials. In particular, we
give a necessary and sufficient condition on the existence of short trajectories.

\end{abstract}

\bigskip\textit{2010 Mathematics subject classification: }30C15, 31A35, 34E05.

\textit{Keywords and phrases: }Quadratic differentials, trajectories and
orthogonal trajectories, Cauchy transform, generalized Bessel and Laguerre polynomials.

\section{Introduction\bigskip\ }

Let consider the quadratic differential $\varpi\left(  \lambda,a,b,z\right)  $
defined on the Riemann sphere $\widehat{%
\mathbb{C}
}$ by :
\begin{equation}
\varpi\left(  \lambda,a,b,z\right)  =\lambda^{2}\frac{\left(  z-a\right)
\left(  z-b\right)  }{z^{4}}dz^{2}, \label{genral qd}%
\end{equation}
where $a,b,$ and $\lambda$ are three non vanishing complex numbers with $a\neq
b.$ \emph{Finite critical points} of $\varpi\left(  \lambda,a,b,z\right)  $
are $a$ and $b,$ as simple zeros; while the origin is an obvious
\emph{infinite critical point,} as a pole of order $4.$ By the change of
variable $u=1/z,$ $\varpi\left(  \lambda,a,b,z\right)  $ has another infinite
critical point, as a double pole that is located at $\infty:$
\begin{equation}
\ \varpi\left(  \lambda,a,b,z\right)  =\left(  \frac{\lambda^{2}}{u^{2}%
}+\mathcal{O}(u^{-1})\right)  du^{2},\quad u\rightarrow0. \label{res}%
\end{equation}
The number $\lambda^{2}$ is called the \emph{residue} of $\varpi\left(
\lambda,a,b,z\right)  $ at the double pole $\infty.$ All other points of
$\widehat{%
\mathbb{C}
}$ are \emph{regular.}

Horizontal trajectories (or just trajectories) are the zero loci of the
equation%
\begin{equation}
\mathcal{\Im}\int^{z}\lambda\frac{\sqrt{\left(  t-a\right)  \left(
t-b\right)  }}{t^{2}}\,dt=\text{\emph{const}}, \label{re}%
\end{equation}
or equivalently%
\begin{equation}
\lambda^{2}\frac{\left(  z-a\right)  \left(  z-b\right)  }{z^{4}}dz^{2}>0.
\label{>0}%
\end{equation}
Vertical (or, orthogonal) trajectories are obtained by replacing $\Im$ and $>$
by $\Re$ and $<$ respectively in the equations (\ref{re}) and (\ref{>0}).
Horizontal and vertical trajectories of $\varpi\left(  \lambda,a,b,z\right)  $
produce two pairwise orthogonal foliations of the Riemann sphere $\widehat{%
\mathbb{C}
}.$

A trajectory going through $a$ or $b$ is called \emph{critical trajectory}. If
it starts and ends at $a$ or $b$ it is called \emph{finite critical trajectory
}or\emph{ short trajectory}. If it starts at $a$ or $b$ but tends either to
the origin or to infinity, we call it \emph{infinite critical trajectory}. The
closure of the set of all critical trajectories of $\varpi\left(
\lambda,a,b,z\right)  $ is called the \emph{critical graph} of $\varpi\left(
\lambda,a,b,z\right)  .$ It will be denoted by $\Gamma\left(  \lambda
,a,b,z\right)  .$

In section 2, we give a full description of the critical graph of
$\varpi\left(  \lambda,a,b,z\right)  .$ In particular, Proposition \ref{main}
gives a necessary and sufficient condition on the existence of finite critical trajectories.

Quadratic differentials (\ref{genral qd}) appear in the investigation of the
existence of a compactly supported positive measure $\mu$ (in the literature,
such a measure $\mu$ is called \emph{positive mother-body measure}) whose
Cauchy transform $\mathcal{C}_{\mu}\left(  z\right)  $ satisfies in some
non-empty subset of $%
\mathbb{C}
$ an irreducible algebraic equation in the form :
\begin{equation}
z^{2}\mathcal{C}^{2}\left(  z\right)  +\left(  pz+q\right)  \mathcal{C}\left(
z\right)  +r=0, \label{eq alg}%
\end{equation}
where $p,$ $q$ and $r$ are three non vanishing complex numbers.

Proposition \ref{first prop} below answers this question. In section 3, we
make the connection with generalized Bessel polynomials $B_{n}^{\left(
\alpha\right)  }$ when the parameters determining these polynomials are
complex and depend linearly on the degree.

\bigskip

\section{Critical graph of $\varpi\left(  \lambda,a,b,z\right)  $}

From each zero, $a$ and $b,$ there emanate $3$ critical trajectories spacing
under equal angle $2\pi/3.$ Since the origin is a pole of order $4,$ there are
two opposite asymptotic directions (called \emph{critical directions}) :
\[
D_{k}=\left\{  z\in%
\mathbb{C}
:\arg z=\frac{\arg\left(  \lambda^{2}ab\right)  }{2}+k\pi\right\}  ;k=0,1,
\]
and a neighborhood $\mathcal{V}$ of the origin, such that each trajectory
entering $\mathcal{V}$ stays in $\mathcal{V}$ and tends to the origin
following a certain critical direction; see Fig.\ref{1}. The behavior of
trajectories near the double pole $\infty$ depends on the residue $\lambda
^{2}$ defined in (\ref{res}); they have the radial, circle, or log-spiral
forms, respectively if $\lambda\in%
\mathbb{R}
^{\ast},$ $\lambda\in i%
\mathbb{R}
^{\ast},$ or $\lambda^{2}\notin%
\mathbb{R}
;$ see Fig.\ref{2}.

Jenkins Theorem \cite[Theorem 3.5]{MR1929066} asserts that the critical graph
$\Gamma\left(  \lambda,a,b,z\right)  $ splits the Riemann sphere into a finite
number of domains called the \emph{domain configurations} of $\varpi\left(
\lambda,a,b,z\right)  .$ There are in general five possible types of domain
configurations : \emph{Half-plane, Strip,} \emph{Circle, Ring, }and\emph{
Dense domains.} Thanks to the Jenkins Three-pole Theorem \cite[Theorem
15.2]{Strebel:84}, since $\varpi\left(  \lambda,a,b,z\right)  $ has only two
poles, it cannot have the last type.
\begin{figure}[tbh]
\begin{minipage}[b]{0.4\linewidth}
		\centering\includegraphics[scale=0.3]{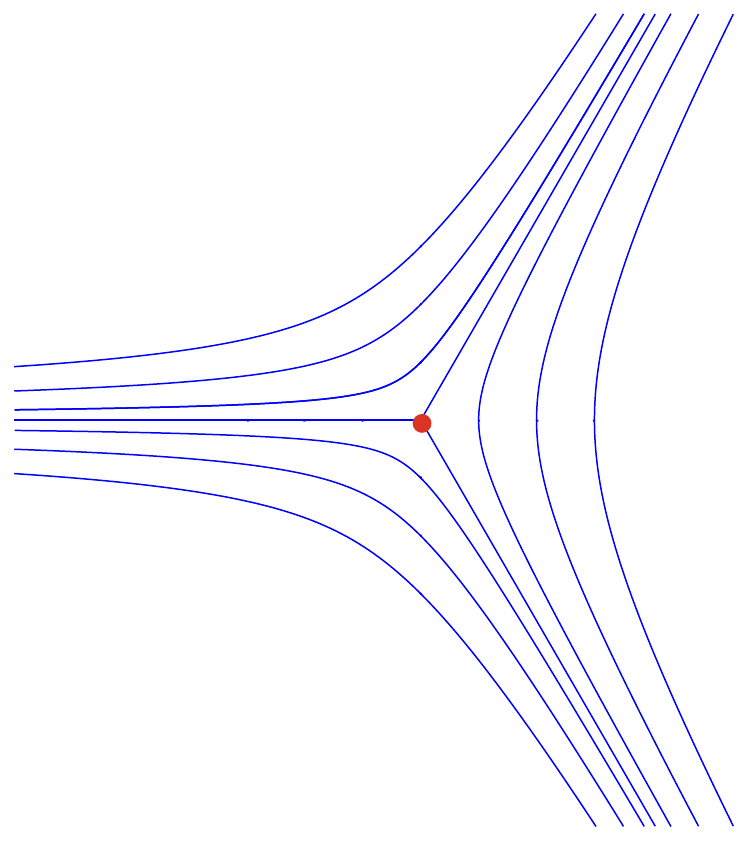}
		\hfill\end{minipage}\hfill\begin{minipage}[b]{0.47\linewidth}
		\centering\includegraphics[scale=0.38]{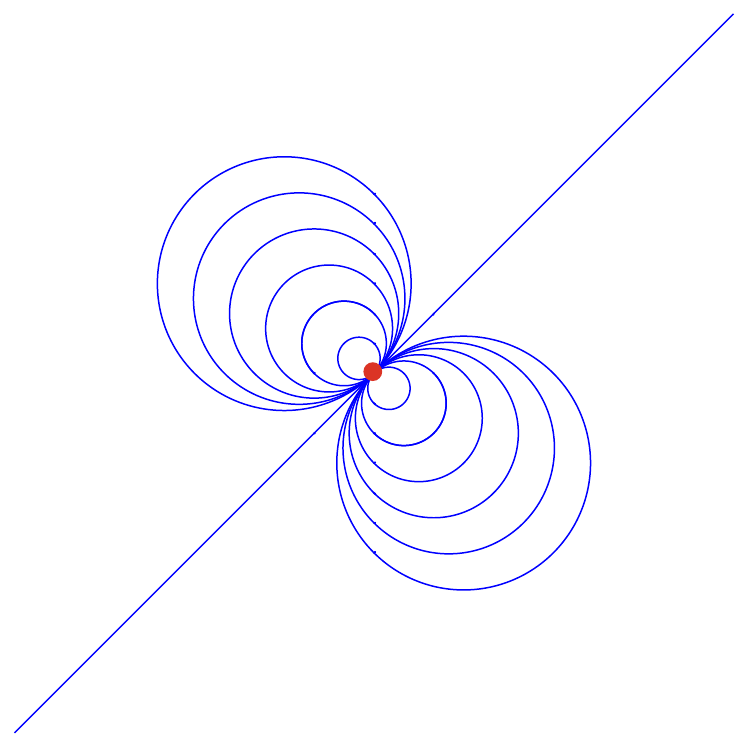}
		\hfill\end{minipage}\caption{Local behavior of the trajectories near a
simple zero (left) and a 4th order pole (right) }%
\label{1}%
\end{figure}\begin{figure}[tbh]
\begin{minipage}[b]{0.3\linewidth}
		\centering\includegraphics[scale=0.25]{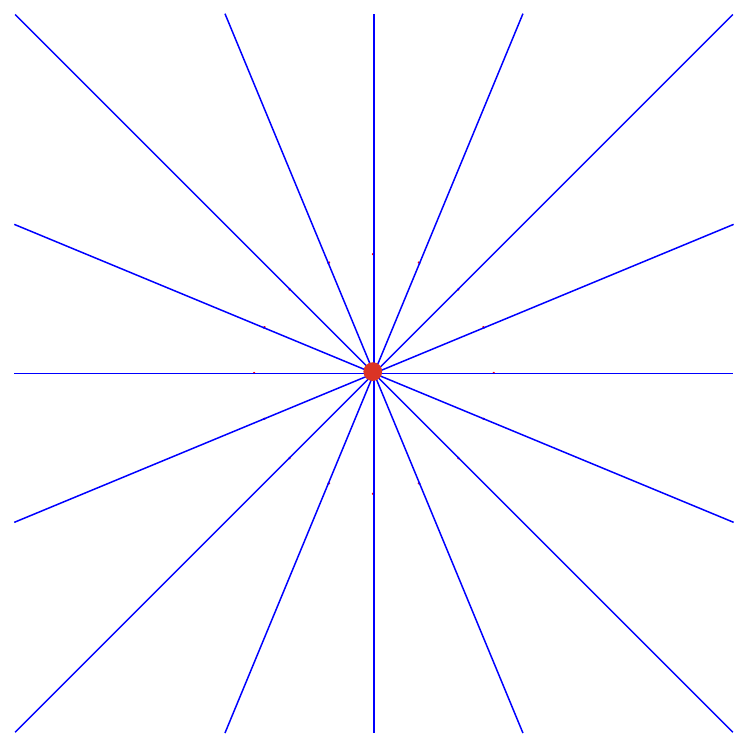}
		\hfill\end{minipage}\hfill\begin{minipage}[b]{0.3\linewidth}
		\centering\includegraphics[scale=0.25]{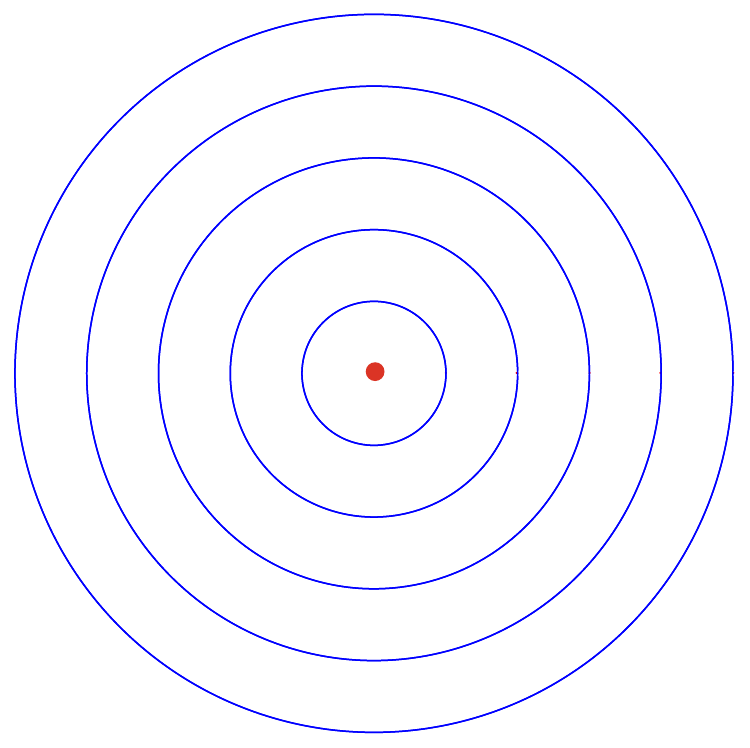}
		\hfill\end{minipage}\hfill\begin{minipage}[b]{0.3\linewidth}
		\centering\includegraphics[scale=0.25]{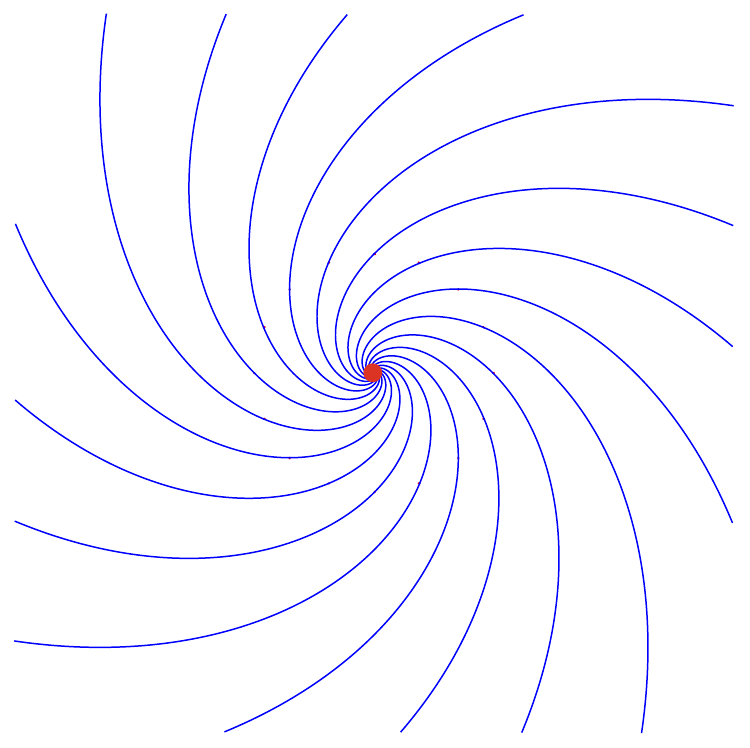}
		\hfill\end{minipage}\caption{Local behavior of the trajectories near
$\infty$ for $\lambda\in\mathbb{R}^{\ast}$ (left)$,$ $\lambda\in
i\mathbb{R}^{\ast}$ (middle), and $\lambda^{2}\notin\mathbb{R}$ (right).}%
\label{2}%
\end{figure}

The following Theorem \cite[Theorem 14.1]{Strebel:84} is a powerful tool to
clarify some facts about the global structure of the trajectories :

\begin{theorem}
\label{teich} Let $\Omega$ be a $\varpi$-polygon : a simply connected domain
in $%
\mathbb{C}
$ bounded only by segments of horizontal and/or vertical trajectories of
$\varpi\left(  \lambda,a,b,z\right)  $ (and their endpoints). Let $z_{j}$ be
the critical points of $\varpi\left(  \lambda,a,b,z\right)  $ on the boundary
$\partial\Omega$ of $\Omega,$ with multiplicities $n_{j},$ and let $\theta
_{j}$ be the corresponding interior angles with vertices at $z_{j},$
respectively. Then%
\[
\sum\left(  1-\theta_{j}\dfrac{n_{j}+2}{2\pi}\right)  =2+\sum n_{i},
\]
where $n_{i}$ are the multiplicities of all critical points of $\varpi\left(
\lambda,a,b,z\right)  $ inside $\Omega.$
\end{theorem}

For more details on the theory of quadratic differentials and its
trajectories, we refer the reader to \cite{MR1929066},\cite{Strebel:84}.

A necessary condition on the existence of a short trajectory joining $a$ and
$b$ is that
\[
\mathcal{\Im}\int_{\gamma}\lambda\frac{\sqrt{\left(  t-a\right)  \left(
t-b\right)  }}{t^{2}}\,dt=0,
\]
for some Jordan arc $\gamma$ connecting $a$ and $b$ in $%
\mathbb{C}
\setminus\left\{  0\right\}  .$

The main result of this section is the following :

\begin{proposition}
\label{main} Quadratic differential $\varpi\left(  \lambda,a,b,z\right)  $ has
a finite critical trajectory connecting $a$ and $b,$ if and only if,
\begin{equation}
\Re\left(  \lambda\frac{\left(  \sqrt{a}+\sqrt{b}\right)  ^{2}}{\sqrt{ab}%
}\right)  =0,\text{ or }\Re\left(  \lambda\frac{\left(  \sqrt{a}-\sqrt
{b}\right)  ^{2}}{\sqrt{ab}}\right)  =0. \label{cond nec}%
\end{equation}

\end{proposition}

Below, given an oriented Jordan curve $\gamma$ joining $a$ and $b$ in $%
\mathbb{C}
\setminus\left\{  0\right\}  ,$ for $t\in\gamma,$ we denote by $\left(
\sqrt{\left(  t-a\right)  \left(  t-b\right)  }\right)  _{+}$ and $\left(
\sqrt{\left(  t-a\right)  \left(  t-b\right)  }\right)  _{-}$ the limits from
the $+$-side and $-$-side respectively. (As usual, the $+$-side of an oriented
curve lies to the left, and the $-$-side lies to the right, if one traverses
$\gamma$ according to its orientation.)

\begin{lemma}
\label{real} For any oriented Jordan curve $\gamma$ joining $a$ and $b$ in $%
\mathbb{C}
\setminus\left\{  0\right\}  ,$ we have:%
\[
\int_{\gamma}\dfrac{\left(  \sqrt{\left(  z-a\right)  \left(  z-b\right)
}\right)  _{+}}{z^{2}}dz=\pm\frac{i\pi}{2}\frac{\left(  \sqrt{a}\pm\sqrt
{b}\right)  ^{2}}{\sqrt{ab}},
\]
the signs $\pm$ depend on the homotopy class of $\gamma$ in $%
\mathbb{C}
\setminus\left\{  0\right\}  ,$ and the branch of the square root
$\sqrt{\left(  z-a\right)  \left(  z-b\right)  }$ defined in $%
\mathbb{C}
\setminus\gamma$ is taken with normalization condition :
\[
\sqrt{\left(  z-a\right)  \left(  z-b\right)  }\sim z,z\rightarrow\infty.
\]

\end{lemma}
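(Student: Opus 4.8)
The plan is to regard $R(z) := \sqrt{(z-a)(z-b)}$ as the single-valued branch on $\mathbb{C}\setminus\gamma$ normalized by $R(z)\sim z$ as $z\to\infty$, and to convert the one-sided integral along $\gamma$ into a closed contour integral of the off-the-cut meromorphic function $f(z) := R(z)/z^{2}$. Since the boundary values satisfy $(R)_{-}=-(R)_{+}$ across $\gamma$, a thin positively oriented ``dogbone'' contour $\Gamma$ hugging $\gamma$ (running along the $+$-side from $a$ to $b$ and back along the $-$-side) satisfies
\[
\oint_{\Gamma} f(z)\,dz = 2\int_{\gamma}\frac{(R)_{+}}{z^{2}}\,dz,
\]
so it suffices to evaluate the left-hand side by residues.

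Next I would deform $\Gamma$ across the sphere. The function $f$ is holomorphic on $\overline{\mathbb{C}}\setminus(\gamma\cup\{0\})$, its only singularities off the cut being the double pole at $z=0$ and the pole at $z=\infty$. Because $\gamma$ is an arc (not a loop), $0$ lies outside $\Gamma$; deforming $\Gamma$ outward to a large circle $C_{R}$ while excising a small circle $C_{\varepsilon}$ around the origin and applying Cauchy's theorem gives
\[
\oint_{\Gamma} f(z)\,dz = \oint_{C_{R}} f(z)\,dz - \oint_{C_{\varepsilon}} f(z)\,dz = -2\pi i\,\operatorname{Res}_{z=\infty} f - 2\pi i\,\operatorname{Res}_{z=0} f.
\]
The two residues are then read off from elementary expansions. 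From $R(z)=z-\tfrac{a+b}{2}+\mathcal{O}(1/z)$ one finds that the coefficient of $1/z$ in $f$ equals $1$, hence $\oint_{C_{R}} f\,dz = 2\pi i$; and from $R(z)=R(0)-\tfrac{a+b}{2R(0)}z+\mathcal{O}(z^{2})$ near the origin one gets $\operatorname{Res}_{z=0} f = -\tfrac{a+b}{2R(0)}$, where $R(0)$ is the value of the chosen branch at $0$ and satisfies $R(0)^{2}=ab$.

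Combining these yields
\[
\int_{\gamma}\frac{(R)_{+}}{z^{2}}\,dz = \pi i\left(1+\frac{a+b}{2R(0)}\right) = \pi i\,\frac{2R(0)+a+b}{2R(0)},
\]
and here I would account for the stated signs. Writing $R(0)=\pm\sqrt{ab}$ according to the two possibilities forced by the homotopy class of $\gamma$ in $\mathbb{C}^{\ast}$ (that is, on which side of $0$ the cut passes), the numerator becomes $2R(0)+a+b=(\sqrt{a}\pm\sqrt{b})^{2}$, and dividing by $2R(0)=\pm 2\sqrt{ab}$ collapses the right-hand side to $\pm\tfrac{i\pi}{2}\tfrac{(\sqrt{a}\pm\sqrt{b})^{2}}{\sqrt{ab}}$; reversing the orientation of $\gamma$ interchanges the $\pm$-sides and flips the overall sign, accounting for the remaining freedom. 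The residue computations themselves are harmless; the one delicate point, which is where I expect the real work to lie, is the sign bookkeeping—matching the branch value $R(0)$, the orientation of the dogbone, and the homotopy class of $\gamma$ to the two independent $\pm$ appearing in the statement.
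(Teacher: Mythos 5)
Your proposal is essentially the paper's own proof: both use the jump relation $\left(\sqrt{(z-a)(z-b)}\right)_{-}=-\left(\sqrt{(z-a)(z-b)}\right)_{+}$ to rewrite $2\int_{\gamma}$ as a closed contour integral hugging the cut, deform it to pick up the residues at $z=0$ and $z=\infty$ computed from the same two expansions, and finish with the identity $a+b\pm 2\sqrt{ab}=\left(\sqrt{a}\pm\sqrt{b}\right)^{2}$. The one blemish is an orientation slip --- the dogbone you describe ($+$-side from $a$ to $b$, back along the $-$-side) is in fact \emph{clockwise} about $\gamma$, so your deformation identity should carry an extra minus sign and your final value comes out with the opposite overall sign --- but this is absorbed by the $\pm$ freedom the lemma itself allows (the paper makes the identical move, inserting $\pm i\pi$ at the residue step), so the argument stands.
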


\begin{proof}
With the above choice of the square root, consider%
\[
I=\int_{\gamma}\frac{\left(  \sqrt{\left(  z-a\right)  \left(  z-b\right)
}\right)  _{+}}{z^{2}}dz.
\]
Since
\[
\left(  \sqrt{\left(  t-a\right)  \left(  t-b\right)  }\right)  _{+}=-\left(
\sqrt{\left(  t-a\right)  \left(  t-b\right)  }\right)  _{-},t\in\gamma,
\]
we have
\begin{align*}
2I  &  =\int_{\gamma}\left[  \frac{\left(  \sqrt{\left(  t-a\right)  \left(
t-b\right)  }\right)  _{+}}{t^{2}}-\dfrac{\left(  \sqrt{\left(  t-a\right)
\left(  t-b\right)  }\right)  _{-}}{t^{2}}\right]  dt\\
&  =\oint_{\Gamma}\dfrac{\sqrt{\left(  z-a\right)  \left(  z-b\right)  }%
}{z^{2}}dz,
\end{align*}
where $\Gamma$ is a closed contour encircling the curve $\gamma$ once in the
clockwise direction and not encircling $z=0.$ After a contour deformation we
pick up residues at $z=0$ and at $z=\infty.$ From the expressions%
\begin{align*}
\frac{\sqrt{\left(  z-a\right)  \left(  z-b\right)  }}{z^{2}}  &
=\allowbreak\frac{\sqrt{ab}}{z^{2}}-\frac{\sqrt{ab}\left(  a+b\right)  }%
{2ab}\frac{1}{z}+\mathcal{O}\left(  1\right)  ,z\rightarrow0;\\
\frac{\sqrt{\left(  z-a\right)  \left(  z-b\right)  }}{z^{2}}  &  =\frac{1}%
{z}+\mathcal{O}\left(  z^{-2}\right)  ,z\rightarrow\infty,
\end{align*}
we get, for any choice of the square roots
\begin{align*}
I  &  =\frac{1}{2}\oint_{\gamma}\frac{\sqrt{\left(  z-a\right)  \left(
z-b\right)  }}{z^{2}}dz=\pm i\pi\left(  \underset{0}{res}+\underset{\infty
}{res}\right)  \left(  \frac{\sqrt{\left(  z-a\right)  \left(  z-b\right)  }%
}{z^{2}}\right) \\
&  =\pm i\pi\left(  \frac{\sqrt{ab}\left(  a+b\right)  -2ab}{2ab}\right)  =\pm
i\frac{\pi}{2}\frac{\left(  \sqrt{a}\pm\sqrt{b}\right)  ^{2}}{\sqrt{ab}}.
\end{align*}
We just proved the necessary condition of Proposition \ref{main}.
\end{proof}

\begin{lemma}
\label{2TRAJ TO INF}If $\Re\left(  \lambda\right)  \neq0,$ then there exists
always a critical trajectory of $\varpi\left(  \lambda,a,b,z\right)  $
diverging to infinity; if there are two, then they emanate from different zeros.
\end{lemma}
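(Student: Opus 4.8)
The plan is to combine the local trajectory structure at the two poles of $\varpi(\lambda,a,b,z)$ with the Teichm\"{u}ller count of Lemma \ref{teich}. Recall that exactly three critical trajectories leave each of the zeros $a,b$, under equal angles $2\pi/3$, so there are six separatrices in all, and each of them must either end at the other zero (a short trajectory, governed by Proposition \ref{main}) or diverge to one of the poles $0$ or $\infty$. Under the hypothesis $\Re(\lambda)\neq 0$ we have $\lambda^{2}\notin\mathbb{R}_{<0}$, so near $\infty$ the trajectories are radial or log-spiral and every trajectory in a punctured neighborhood of $\infty$ tends to $\infty$; likewise the fourth-order pole at $0$ absorbs all nearby trajectories along its two critical asymptotic directions. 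Both poles thus act as sinks, and the argument reduces to bookkeeping of where the six separatrices terminate.

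For the existence assertion I would argue through the global structure of the foliation. Let $U$ be the maximal domain containing a punctured neighborhood of $\infty$ that is swept out by trajectories tending to $\infty$. Since the zeros $a\neq b$ are finite critical points, $U$ cannot be all of $\overline{\mathbb{C}}\setminus\{0,\infty\}$, so it is a proper subdomain and its boundary, away from the poles, consists of critical trajectories. A boundary separatrix of $U$ is a limit of trajectories tending to $\infty$ and therefore tends to $\infty$ itself; emanating as it does from a zero, it is exactly the critical trajectory diverging to infinity that we seek. This is the step where $\Re(\lambda)\neq 0$ is essential: it excludes the circular case $\lambda^{2}<0$, in which $\infty$ is surrounded by closed trajectories and no separatrix need reach it.

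For the second assertion I would suppose, toward a contradiction, that two separatrices $\gamma_{1},\gamma_{2}$ emanate from the \emph{same} zero $a$ and both diverge to $\infty$; together they bound a $\varpi$-polygon with vertices $a$ and $\infty$. To handle the second-order pole I cut it off by an orthogonal trajectory crossing $\gamma_{1}$ and $\gamma_{2}$ at right angles, obtaining a $\varpi$-polygon $\Omega$ whose vertices are $a$ (interior angle $\theta_{a}\in\{2\pi/3,4\pi/3\}$, multiplicity $n_{a}=1$) together with two regular right-angle corners, each contributing $\theta_{j}(n_{j}+2)/2\pi=1/2$. Lemma \ref{teich} then forces the interior multiplicities to satisfy $\sum n_{i}=-3\theta_{a}/2\pi\in\{-2,-1\}$, which is impossible because the only critical point that can lie inside $\Omega$ is the simple zero $b$, of multiplicity $+1$. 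Hence two trajectories diverging to infinity cannot share a zero, so if there are two they must emanate from different zeros.

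The existence half is the main obstacle: the ``different zeros'' half is a single clean Teichm\"{u}ller computation, whereas existence rests on the structural claim that the radial/spiral end at the double pole is bounded by separatrices entering $\infty$. The delicate point is to justify that $\partial U$ is genuinely built from trajectories tending to $\infty$, rather than from recurrent trajectories or the boundary of a ring domain. I would secure this either from Strebel's classification of the domain types of a meromorphic quadratic differential on $\overline{\mathbb{C}}$, or, staying within the present framework, by applying the same $\varpi$-polygon analysis to the remaining configurations $c=0$ (interior angles $\theta_{0}\in\{0,\pi,2\pi\}$) to exclude the degenerate possibility that all six separatrices are swallowed by the origin.
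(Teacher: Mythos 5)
Your ``different zeros'' argument is essentially the paper's own proof of that half: the paper also takes two trajectories $\gamma_{1},\gamma_{2}$ emanating from the same zero $a$ and diverging to $\infty$, truncates the resulting $\varpi$-polygon by an orthogonal trajectory (the two right-angle corners contribute $\tfrac12+\tfrac12$, i.e.\ $\beta_{\infty}=1$ in the limit), and gets $\sum n_{i}=-3\theta_{a}/(2\pi)\in\{-1,-2\}$ from Lemma \ref{teich}. However, your justification of the final contradiction is not correct as written: it is false that ``the only critical point that can lie inside $\Omega$ is the simple zero $b$''. Nothing prevents the fourth-order pole at the origin from lying inside $\Omega$, and it carries multiplicity $n_{0}=-4$. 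The contradiction survives, because then $\sum n_{i}\in\{-4,-3\}$, while if the origin lies outside one has $\sum n_{i}\in\{0,1\}$; in neither case can the sum equal $-1$ or $-2$. You should argue it this way; the step as stated is wrong even though the conclusion is right. (A smaller slip of the same kind: your opening trichotomy ``other zero / $0$ / $\infty$'' omits the possibility of a critical trajectory returning to the \emph{same} zero, which the paper's definition still counts as a finite critical trajectory; your later arguments do not use the trichotomy, but the sentence should be fixed.)

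On the existence assertion your route genuinely differs from the paper's text, and is in fact more candid about where the difficulty sits. The paper's proof of Lemma \ref{2TRAJ TO INF} consists solely of the polygon analysis (impossibility of $c=\infty$, plus the classification of the $c=0$ polygons) followed by ``we just proved the following''; it never rules out the scenario in which all six separatrices are short or are absorbed by the origin, so the existence half is asserted rather than derived. Your end-domain argument is the standard way to supply this, and the gap you flag can be closed concretely: since $\varpi\left(\lambda,a,b,z\right)$ has only two poles, Jenkins' Three Pole Theorem \cite{MR0096806} excludes recurrent (dense) trajectories; if no critical trajectory had $\infty$ as a limit point, the critical graph would be a compact subset of $\mathbb{C}$, so $\infty$ would lie in a maximal domain of the Strebel--Jenkins structure decomposition \cite{Strebel:84}, necessarily a half-plane, strip, ring, or circle domain. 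The only such domain that can surround a pole is a circle domain, swept by closed trajectories around a second-order pole, and that requires the circular local structure $\lambda^{2}<0$, i.e.\ $\Re\left(\lambda\right)=0$. Hence $\Re\left(\lambda\right)\neq0$ forces some critical trajectory to diverge to $\infty$. If you include this (or an equivalent use of the classification), your proof of the existence half is complete --- and, unlike the second half, it then contains an argument the paper itself does not spell out.
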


\begin{proof}
\bigskip Let us keep the notations of Lemma \ref{teich}. Suppose that
$\gamma_{1}$ and $\gamma_{2}$ are two trajectories emanating from $a,$ spacing
with angle $\theta_{a}\in\left\{  \frac{2\pi}{3},\frac{4\pi}{3}\right\}  ,$
and diverging simultaneously to the same pole $c\in$ $\left\{  \infty
,0\right\}  .$ They form an $\varpi$-polygon $\Omega$ with vertices $a$ and
$c.$

\begin{itemize}
\item If $c=\infty,$ then there is always an orthogonal trajectory
intersecting $\gamma_{1}$ and $\gamma_{2}$ at right angles; to each of these
two corners, formed in this way, it corresponds the value of $\beta_{j}%
=\frac{1}{2},$ so their sum equals $1.$ Making the intersection points
approach $c=\infty,$ we see that in the limit we can consider $\beta_{c}=1$
for $z_{c}=\infty.$ Then
\[
\sum n_{i}=-1+\left(  1-\dfrac{3\theta_{a}}{2\pi}\right)  \in\left\{
-2,-1\right\}  ;
\]
which violates Lemma \ref{teich}.

\item If $c=0,$ let $\theta_{0}\in\left\{  0,\pi,2\pi\right\}  $ be the
interior angle of the $\varpi$-polygon $\Omega$ between $\gamma_{1}$ and
$\gamma_{2}$ at $c=0$.

\begin{itemize}
\item If $\theta_{0}=0,$ then
\[
\sum n_{i}=-\dfrac{3\theta}{2\pi}\in\left\{  -2,-1\right\}  ;
\]
which cannot hold.

\item If $\theta_{0}=\pi,$ then%
\[
\sum n_{i}=1-\dfrac{3\theta}{2\pi}\in\left\{  0,-1\right\}  .
\]
Thus, $\theta=\frac{2\pi}{3}$ and $\Omega$ does not contain inside itself any
critical point; see Figure \ref{FIG2}.

\item If $\theta_{0}=2\pi,$ then%
\[
\sum n_{i}=2-\dfrac{3\theta}{2\pi}\in\left\{  0,1\right\}  .
\]
Thus, if $\theta=\frac{2\pi}{3},$ then $\Omega$ contains inside itself the
other zero $b$ and the 3 trajectories emanating from it; if $\theta=\frac
{4\pi}{3},$ then $\Omega$ does not contain inside itself any critical point;
see Figure \ref{FIG3}.
\end{itemize}
\end{itemize}
\end{proof}

\begin{figure}[h]
\begin{minipage}[b]{0.48\linewidth}
\centering \includegraphics[scale=0.4]{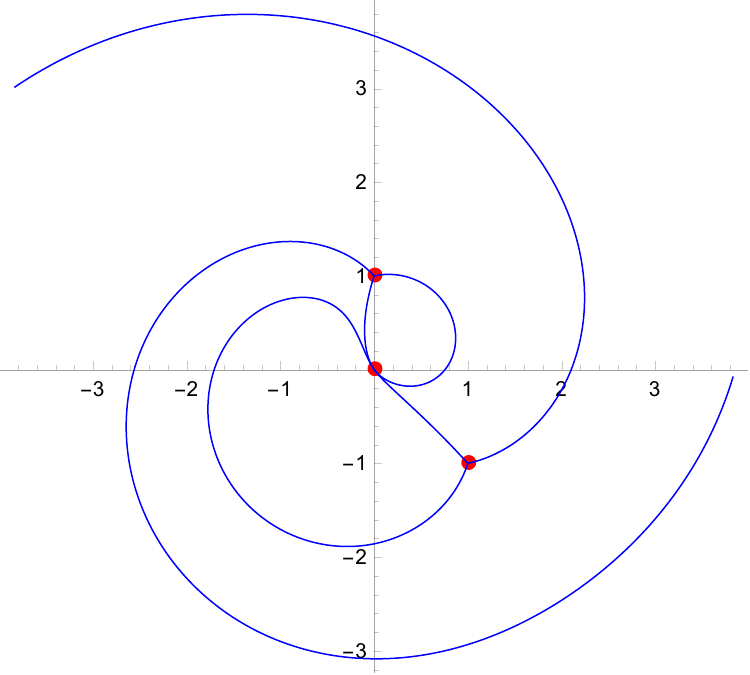}
\caption{$c=0$ and $\theta_{0}=\pi.$ Here $\lambda^{2}%
=-2-i,a=1-i,b=i$.}%
\label{FIG2}%
\end{minipage}\hfill\begin{minipage}[b]{0.48\linewidth}
\centering \includegraphics[scale=0.5]{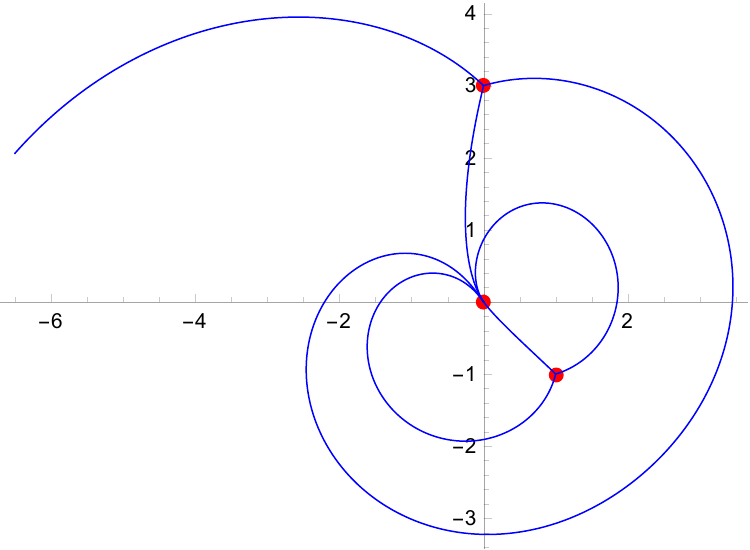}
\caption{$\theta_{0}=2\pi$ and $\theta=\frac{2\pi}{3}.$ Here
$\lambda^{2}=-2-i,a=3i,b=1-i.$ }%
\label{FIG3}%
\end{minipage}
\end{figure}

\begin{remark}
Proposition \ref{main} is in consistency with \cite[Proposition 1.2]{Atia2}.
Indeed, the structure of the critical graph of under Mobius transform (
$z=1/y$ ).
\end{remark}

\begin{proof}
[Proof of Proposition \ref{main}]Suppose that $\Re\left(  \frac{\lambda\left(
\sqrt{a}+\sqrt{b}\right)  ^{2}}{\sqrt{ab}}\right)  =0$ and there is no short
trajectory connecting $a$ and $b.$ The idea of the proof is to construct two
paths $\gamma_{1}$ and $\gamma_{2}$ joining $a$ and $b,$ not homotopic in $%
\mathbb{C}
\setminus\left\{  0\right\}  ,$ and such that%
\[
\Im\int_{\gamma_{1}}\dfrac{\lambda\sqrt{\left(  z-a\right)  \left(
z-b\right)  }}{z^{2}}dz\neq0,\Im\int_{\gamma_{2}}\dfrac{\lambda\sqrt{\left(
z-a\right)  \left(  z-b\right)  }}{z^{2}}dz\neq0,
\]
which contradicts Proposition \ref{real} and the fact that $\Re\left(
\frac{\lambda\left(  \sqrt{a}+\sqrt{b}\right)  ^{2}}{\sqrt{ab}}\right)  =0,$
see examples in \cite{paper1},\cite{Atia2}.

We first assume that $\Re\left(  \lambda\right)  =0.$ Since $\infty$ is a
double pole with negative residue, $\varpi\left(  \lambda,a,b,z\right)  $ has
a circular domain bounded by critical trajectories. More precisely, there
emanate from a zero of $\varpi\left(  \lambda,a,b,z\right)  ,$ for example
$a,$ two critical trajectories forming a loop $\Omega$ around $\infty$ (in
$\widehat{%
\mathbb{C}
}$). The third one, say $\gamma_{a},$ diverges to the origin. The interior
angle of the loop $\Omega$ at the vertices $a$ equals $\frac{4\pi}{3};$
applying Lemma \ref{teich}, we get
\[
-1=2+\sum n_{i}.
\]
It follows that $\Omega$ contains $0$ and $b$ inside, and then, all critical
trajectories emanating from $b$ stay in $\Omega.$ Again, with Lemma
\ref{teich}, there cannot exist a loop passing through $b,$ and then, we have
four trajectories that diverge to the origin. Assume that three of them
emanate from $b$ and diverge to the origin from the same direction. Then two
of them, spacing under angle $\theta\in\left\{  2\pi/3,4\pi/3\right\}  ,$
should form together with the origin, a domain $\Omega_{b}$ not intersecting
$\gamma_{a}.$ Applying Lemma \ref{teich}, we get the contradiction
\[
2=2+\sum n_{i}=\left(  2-\frac{3\theta}{2\pi}\right)  \in\left\{  0,1\right\}
.
\]
Then, there are exactly two trajectories $\gamma_{b,1}$ and $\gamma_{b,2}$
emanating from $b,$ and diverging to the origin from the same direction of
$\gamma_{a}.$ Consider a point $c\in\gamma_{a}$ close to the origin, in a way
that the two rays of the orthogonal trajectory $\gamma_{c}^{\perp}$ passing
through $c,$ tend to the origin in opposite directions. Then, $\gamma
_{c}^{\perp}$ intersects $\gamma_{b,1}$ and $\gamma_{b,2}$ respectively at two
distinct points, say $b_{1}$ and $b_{2}.$ Finally, set

\begin{itemize}
\item $\gamma_{1}$: the path formed by the part of $\gamma_{b,1}$ from $b$ to
$b_{1},$ the part of $\gamma_{c}^{\perp}$ from $b_{1}$ to $c,$ and, the part
of $\gamma_{a}$ from $c$ to $a.$

\item $\gamma_{2}$: the path formed by the part of $\gamma_{b,2}$ from $b$ to
$b_{2},$ the part of $\gamma_{c}^{\perp}$ from $b_{2}$ to $c,$ and, the part
of $\gamma_{a}$ from $c$ to $a.$
\end{itemize}

\begin{figure}[tbh]
\centering
\begin{tabular}
[c]{ll}%
\mbox{\begin{overpic}[scale=0.6]{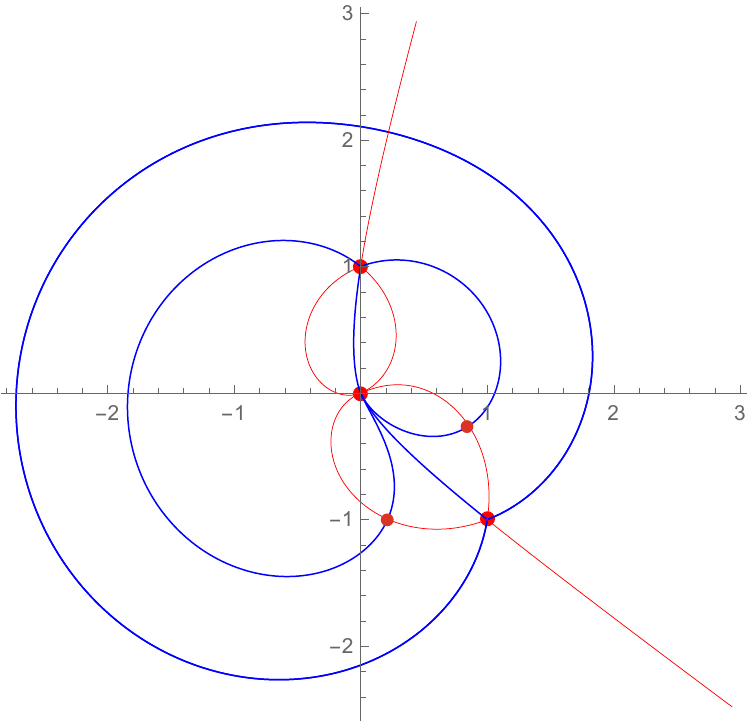}				
				\put(67,25){$a=c$ }
				\put(22,63){\small $\gamma_{b,1}$ }
				\put(61,61){\small $\gamma_{b,2}$ }
				\put(46,25){\small $b_{1}$ }
				\put(66,38){\small $b_{2}$ }
				\put(46,63){\small $b$ }
			
		\end{overpic}} &
\end{tabular}
\caption{Horizontal (blue) and vertical (red) trajectories of $\varpi\left(
i,i,1-i\right)  $ and the paths $\gamma_{1}$ and $\gamma_{2}.$ Here, we take,
without loss of the generality, $a=c.$ }%
\end{figure}See Figure \ref{FIG4}, where we take, without loss of the
generality $a=c.$ Clearly, $\gamma_{1}$ and $\gamma_{2}$ are two Jordan arcs
connecting $a$ and $b,$ not homotopic in $%
\mathbb{C}
\setminus\left\{  0\right\}  .$ Notice that, if $z\left(  s\right)  ,s\in
I\subset%
\mathbb{R}
$ is an arc of a vertical trajectory of $\varpi\left(  \lambda,a,b,z\right)
,$ then the function $s\longmapsto\mathcal{\Im}\int^{s}\lambda\frac
{\sqrt{\left(  z\left(  t\right)  -a\right)  \left(  z\left(  t\right)
-b\right)  }}{z\left(  t\right)  ^{2}}\,z^{\prime}\left(  t\right)  dt$ is
monotone. Integrating along each of the paths $\gamma_{1}$ and $\gamma_{2},$
we get :
\begin{align*}
\Im\int_{\gamma_{1}}\dfrac{\lambda\sqrt{\left(  z-a\right)  \left(
z-b\right)  }}{z^{2}}dz &  =\Im\int_{b_{1}}^{c}\dfrac{\lambda\sqrt{\left(
z-a\right)  \left(  z-b\right)  }}{z^{2}}dz\neq0;\\
\Im\int_{\gamma_{2}}\dfrac{\lambda\sqrt{\left(  z-a\right)  \left(
z-b\right)  }}{z^{2}}dz &  =\Im\int_{b_{2}}^{c}\dfrac{\lambda\sqrt{\left(
z-a\right)  \left(  z-b\right)  }}{z^{2}}dz\neq0,
\end{align*}
which violates the hypothesis.

The case $\Re\left(  \lambda\right)  \neq0$ is in the same vein since from
Lemma \ref{2TRAJ TO INF}, there are at list 4 critical trajectories diverging
to the origin.
\end{proof}

\begin{remark}
Proposition \ref{main} is in consistency with \cite[Proposition 1.2]{Atia2}.
Indeed, the structure of the critical graph of a quadratic differential is
invariant under Mobius transform; in particular, by the change of variable
$z=1/y$ in $\varpi\left(  \lambda,a,b,z\right)  ,$ we get the quadratic
differential studied in \cite{Atia2}.
\end{remark}

\begin{proposition}
\label{first prop}If equation (\ref{eq alg}) admits a real mother-body measure
$\mu,$ then:

\begin{itemize}
\item for some choice of the square root, $p-\sqrt{p^{2}-4r}\in%
\mathbb{R}
;$

\item any connected curve in the support of $\mu$ coincides with a short
trajectory of the quadratic differential
\begin{equation}
\varpi=-\frac{D\left(  z\right)  }{z^{4}}dz^{2}, \label{1qd}%
\end{equation}
where
\[
D\left(  z\right)  =\left(  p^{2}-4r\right)  z^{2}+2pqz+q^{2}%
\]
is the discriminant of the quadratic equation (\ref{eq alg}).
\end{itemize}
\end{proposition}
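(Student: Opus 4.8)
The plan is to read off both conclusions from the two natural ways of probing the Cauchy transform: its expansion at $\infty$ gives the first bullet, and its jump across the support gives the second. First I would solve the algebraic equation (\ref{eq alg}) for $\mathcal{C}$, obtaining
\[
\mathcal{C}(z)=\frac{(pz+q)\pm\sqrt{D(z)}}{2z^{2}},\qquad D(z)=(p^{2}-4r)z^{2}+2pqz+q^{2}.
\]
Since $\mu$ is a positive compactly supported measure, $\mathcal{C}_{\mu}$ is holomorphic near $\infty$ with expansion $\mathcal{C}_{\mu}(z)=\frac{m}{z}+\mathcal{O}(z^{-2})$, where $m=\mu(\mathbb{C})>0$ is the real, positive total mass. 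Choosing the branch of $\sqrt{D(z)}$ with $\sqrt{D(z)}\sim\sqrt{p^{2}-4r}\,z$ as $z\to\infty$ together with the corresponding root, the leading behaviour of the right-hand side is $\frac{p+\sqrt{p^{2}-4r}}{2z}$. Matching the coefficients of $z^{-1}$ forces $p+\sqrt{p^{2}-4r}=2m\in\mathbb{R}$, which is exactly the first bullet (with that choice of square root).

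For the second bullet I would analyse the jump of $\mathcal{C}_{\mu}$ across its support. On a smooth arc $\gamma$ of $\mathrm{supp}\,\mu$ the transform takes the two algebraic branches $\mathcal{C}_{\pm}$ on the two sides, so its boundary values satisfy $\mathcal{C}_{+}-\mathcal{C}_{-}=\pm\frac{\sqrt{D(z)}}{z^{2}}$ and hence $(\mathcal{C}_{+}-\mathcal{C}_{-})^{2}=\frac{D(z)}{z^{4}}$. The Sokhotski--Plemelj formula ties this jump to the density: writing $d\mu=\rho\,|dz|$ with $\rho\ge0$ along $\gamma$, one gets $(\mathcal{C}_{+}-\mathcal{C}_{-})\,dz=-2\pi i\,\rho\,|dz|$, which is purely imaginary. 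Squaring gives $\frac{D(z)}{z^{4}}\,dz^{2}=(\mathcal{C}_{+}-\mathcal{C}_{-})^{2}\,dz^{2}\le0$, that is $-\frac{D(z)}{z^{4}}\,dz^{2}\ge0$ along $\gamma$. This is precisely the defining inequality for $\gamma$ to be a horizontal trajectory of $\varpi=-\frac{D(z)}{z^{4}}dz^{2}$.

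Finally I would pin down the endpoints. The differential $\varpi$ is exactly of the form $\varpi(\lambda,a,b,z)$ studied above, with $\lambda^{2}=-(p^{2}-4r)$ and $a,b$ the two zeros of $D$; thus $z=0$ is a pole of order $4$ and $z=\infty$ a pole of order $2$. Because $\rho=\frac{1}{2\pi}\left|\frac{\sqrt{D(z)}}{z^{2}}\right|$ is non-integrable at $z=0$, the finiteness and compactness of $\mu$ force its support to stay away from a neighbourhood of the origin, while boundedness rules out escape to $\infty$. Hence every connected component of $\mathrm{supp}\,\mu$ is a trajectory arc whose endpoints must be branch points of $\sqrt{D}$, i.e. the zeros $a$ and $b$ of $D$; such an arc is a short (finite critical) trajectory of $\varpi$, as claimed.

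The hard part will be the rigour of the jump step: justifying that $\mathcal{C}_{\mu}$ genuinely interchanges the two algebraic branches across each component of the support, fixing the orientation convention in the Plemelj formula so that positivity of $\rho$ yields the correct sign, and legitimising the assumption that the support is a finite union of analytic arcs on which a density exists. The expansion at $\infty$ and the endpoint argument are then routine; the delicate point is converting the reality and positivity of $\mu$ into the trajectory inequality.
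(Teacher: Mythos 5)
Your overall strategy coincides with the paper's own proof: solve the quadratic to get $\mathcal{C}^{\pm}(z)=\frac{pz+q\pm\sqrt{D(z)}}{2z^{2}}$, match the coefficient of $1/z$ at infinity with the real, positive total mass $\mu(\mathbb{C})$ to obtain $p+\sqrt{p^{2}-4r}=2\mu(\mathbb{C})\in\mathbb{R}$, and use the Plemelj--Sokhotsky relation $(\mathcal{C}_{+}-\mathcal{C}_{-})\,dz\in i\mathbb{R}$ together with $(\mathcal{C}_{+}-\mathcal{C}_{-})^{2}=D(z)/z^{4}$ to conclude $-\frac{D(z)}{z^{4}}dz^{2}>0$ on each arc of the support, i.e.\ each such arc is a horizontal trajectory of (\ref{1qd}). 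Both of these steps are correct and are exactly what the paper does. Note, however, that the structural fact you flag as needing to be ``legitimised'' --- that $\mathrm{supp}(\mu)$ is a finite union of analytic arcs (and isolated points) carrying a density --- is not something positivity alone hands you; the paper imports it from \cite[Theorem 1]{positive}, and without it the jump computation has nothing to act on. A citation or proof of that fact is a required ingredient, not a cosmetic one.

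The genuine gap is in your final step, upgrading ``horizontal trajectory arc'' to ``short trajectory.'' You rule out the endpoints $0$ and $\infty$ (non-integrability of the density at the fourth-order pole, compactness of the support), but that does not force the endpoints of a support arc to be the zeros $a,b$ of $D$: a priori an arc could terminate at a \emph{regular} point of the quadratic differential, being a proper sub-arc of a maximal trajectory, and nothing in your write-up excludes this. What excludes it is the analyticity of $\mathcal{C}_{\mu}$ in $\mathbb{C}\setminus\mathrm{supp}(\mu)$: if an arc ended at a regular point $z_{0}$, both branches $\mathcal{C}^{\pm}$ would be analytic near $z_{0}$, so $\mathcal{C}_{\mu}$ would continue analytically past the endpoint and the jump $\mathcal{C}_{+}-\mathcal{C}_{-}=\sqrt{D}/z^{2}$ would have to vanish there, which is impossible since $D(z_{0})\neq 0$. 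Equivalently, in the paper's formulation: since $\mathcal{C}_{\mu}$ is single-valued and analytic off the support, $\mathrm{supp}(\mu)$ must contain the branch points of $\sqrt{D}$, i.e.\ the zeros of $D$, and the support arcs must end at them. This analyticity argument is the missing idea; your integrability/compactness considerations cannot replace it.
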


\begin{proof}
Solutions of (\ref{eq alg}) as a quadratic equation are%
\[
\mathcal{C}\left(  z\right)  =\frac{-\left(  pz+q\right)  +\sqrt{D\left(
z\right)  }}{2z^{2}},
\]
with some choice of the square root of the discriminant $\sqrt{D\left(
z\right)  }.$ If $\mu$ is a real mother-body measure of (\ref{eq alg}), then,
its Cauchy transform $\mathcal{C}_{\mu}$ is defined in $%
\mathbb{C}
\setminus$\emph{supp}$\left(  \mu\right)  $ by
\[
\mathcal{C}_{\mu}\left(  z\right)  =\int_{%
\mathbb{C}
\setminus\text{\emph{supp}}\left(  \mu\right)  }\frac{d\mu\left(  t\right)
}{z-t};
\]
it satisfies :%
\begin{equation}
\mathcal{C}_{\mu}\left(  z\right)  =\frac{\mu\left(
\mathbb{C}
\right)  }{z}+\mathcal{\allowbreak O}\left(  z^{-2}\right)  ,z\rightarrow
\infty; \label{cauchy cond 1}%
\end{equation}%
\begin{equation}
\mathcal{C}_{\mu}\left(  z\right)  \text{ is analytic in }%
\mathbb{C}
\setminus\emph{supp}\left(  \mu\right)  ; \label{cauchy cond 2}%
\end{equation}
and the inversion formula in the distributional sense :
\[
\mu=\frac{1}{\pi}\frac{\partial\mathcal{C}_{\mu}}{\partial\overline{z}}.
\]

\bigskip From the equality
\[
\mathcal{C}_{\mu}\left(  z\right)  =\frac{-p+\sqrt{p^{2}-4r}}{2z}%
+\mathcal{\allowbreak O}\left(  z^{-2}\right)  ,z\rightarrow\infty,
\]
we conclude that%
\[
\frac{-p+\sqrt{p^{2}-4r}}{2}=\mu\left(
\mathbb{C}
\right)  .
\]

Since the Cauchy transform $\mathcal{C}_{\mu}\left(  z\right)  $ of the
positive measure $\mu$ coincides a.e. in $%
\mathbb{C}
$ with an algebraic solution of the quadratic equation (\ref{eq alg}), it
follows that the support of this measure is a finite union of semi-analytic
curves and isolated points, see \cite[Theorem 1]{positive}. Let $\gamma$ be a
connected curve in the support of $\mu.$ For $t\in\gamma,$ we have
\[
\mathcal{C}_{\mu}^{+}\left(  t\right)  -\mathcal{C}_{\mu}^{-}\left(  t\right)
=\frac{\sqrt{D\left(  t\right)  }}{t^{2}}%
\]
From Plemelj-Sokhotsky's formula, we have
\[
\frac{1}{2\pi i}\left(  \mathcal{C}_{\mu}^{+}\left(  t\right)  -\mathcal{C}%
_{\mu}^{-}\left(  t\right)  \right)  dt\in%
\mathbb{R}
^{\ast},t\in\gamma,
\]
and then we get%
\[
-\frac{D\left(  t\right)  }{t^{4}}dt^{2}>0,t\in\gamma,
\]
which shows that $\gamma$ is a horizontal trajectory of the quadratic
differential (\ref{1qd}).

Since the Cauchy transform $\mathcal{C}_{\mu}\left(  z\right)  $ is well
defined and analytic in $%
\mathbb{C}
\setminus$\emph{supp}$\left(  \mu\right)  ,$ it follows that \emph{supp}%
$\left(  \mu\right)  $ should contain all branching points (zeros) of
(\ref{1qd}). Thus, any connected curve in the support of $\mu$ is a short
trajectory of the quadratic differential (\ref{1qd}).

For the proofs of the general cases, we refer the reader to \cite{amf rakh1},
\cite{pritsker},\cite{Shapiro},\cite{bullgard}.
\end{proof}

\section{Connection with Bessel polynomials $B_{n}^{\left(  \alpha\right)  }$}

Bessel polynomials $B_{n}^{\left(  \alpha\right)  }$ are given explicitly by
(see \cite{bessel})%

\begin{equation}
B_{n}^{\left(  \alpha\right)  }\left(  z\right)  =\sum_{k=0}^{n}\binom{n}%
{k}\left(  n+k+\alpha-2\right)  ^{\left(  k\right)  }z^{k},
\label{poly bessel}%
\end{equation}
where $\binom{n}{k}$ is a binomial coefficient, and $\left(  x\right)
^{\left(  k\right)  }$ as usual, means $x(x-1)...(x-k+1)$. Equivalently, these
polynomials can also be given by the Rodrigues formula
\[
B_{n}^{\left(  \alpha\right)  }\left(  z\right)  =z^{2-\alpha}e^{\frac{1}{z}%
}\frac{d}{dz^{n}}\left(  z^{2n+\alpha-2}e^{-\frac{1}{z}}\right)
\]
Clearly, polynomials $B_{n}^{\left(  \alpha\right)  }$ are entire functions of
the complex parameter $\alpha$. These polynomials fulfill the following
three-term recurrence relation and second degree differential equation%
\begin{align}
&  (n+\alpha-1)(2n+\alpha-2)B_{n}(z)\nonumber\\
&  =((2n+\alpha)(2n+\alpha-2)z+\alpha-2)(2n+\alpha-1)B_{n}(z)+n(2n+\alpha
)B_{n-1}(z), \label{recurr}%
\end{align}%
\begin{equation}
z^{2}B_{n}^{\prime\prime}(z)+(\alpha z+1)B_{n}^{\prime}(z)-n\left(
n+\alpha-1\right)  B_{n}(z)=0,\ n\geq1. \label{eq diff}%
\end{equation}

\bigskip A non-trivial asymptotic behavior can be obtained in the case of
varying coefficients. Namely, we will consider the sequences%
\[
B_{n}^{(\alpha_{n})},\quad\alpha_{n}/n\underset{n\longrightarrow\infty
}{\longrightarrow}A,
\]
where $A$ is fixed with assumption
\begin{equation}
A\in%
\mathbb{C}
\setminus\left\{  -1,-2\right\}  ,\Im A\geq0. \label{cond AB}%
\end{equation}

With each $B_{n}$ in (\ref{poly bessel}) we can associate its normalized
root-counting measure
\[
\mu_{n}=\mu\left(  B_{n}\right)  =\frac{\sum_{B_{n}\left(  a\right)  =0}%
\delta_{a}}{n},
\]
where $\delta_{a}$ is the Dirac measure supported at $a$ (the zeros are
counted with their multiplicity). The Cauchy transform of $\mu_{n}$ is given
by
\begin{equation}
\mathcal{C}_{\mu_{n}}\left(  z\right)  =\frac{B_{n}^{\prime}\left(  z\right)
}{nB_{n}\left(  z\right)  },B_{n}\left(  z\right)  \neq0.
\end{equation}

Combining (\ref{recurr}) and (\ref{eq diff}), and reasoning like in
\cite{weak} (and references therein), we get the following algebraic equation
\begin{equation}
z^{2}\mathcal{C}_{\mu}^{2}\left(  z\right)  +\left(  Az+1\right)
\mathcal{C}_{\mu}\left(  z\right)  -A-1=0. \label{alg eq}%
\end{equation}
The associated quadratic differential (\ref{1qd}) is:%
\begin{align*}
\varpi_{A}  &  =-\frac{D_{A}\left(  z\right)  }{z^{4}}dz^{2}=-\frac
{(A+2)^{2}z^{2}+2Az+1}{z^{4}}dz^{2}\\
&  =\left(  i(A+2)\right)  ^{2}\frac{\left(  z-\left(  \frac{1-i\sqrt{A+1}%
}{A+2}\right)  ^{2}\right)  \left(  z-\left(  \frac{1+i\sqrt{A+1}}%
{A+2}\right)  ^{2}\right)  }{z^{4}}dz^{2};
\end{align*}
its the critical graph will be denoted by $\Gamma_{A}.$ Straightforward
calculations show that Condition (\ref{cond nec}) is satisfied for $\varpi
_{A};$ more precisely, taking $\lambda=i(A+2),$ $a=\zeta_{-},$ and
$b=\zeta_{+},$ we have :
\[
\lambda\frac{\left(  \sqrt{a}\pm\sqrt{b}\right)  ^{2}}{\sqrt{ab}}=i\left(
A+2\right)  \frac{\left(  \sqrt{\zeta_{-}}\pm\sqrt{\zeta_{+}}\right)  ^{2}%
}{\sqrt{\zeta_{-}\zeta_{+}}}\in4i\left\{  1,-A-1\right\}  .
\]
From Proposition \ref{main}, quadratic differential $\varpi_{A}$ has two short
trajectories, when $A\in%
\mathbb{R}
\setminus\left\{  -2,-1\right\}  $ (see Figures \ref{3},\ref{4}), otherwise,
it has exactly one short trajectory $\delta_{A}$ (see Figures \ref{5}%
,\ref{6},\ref{7}).\begin{figure}[h]
\begin{minipage}[b]{0.3\linewidth}
\centering \includegraphics[scale=0.27]{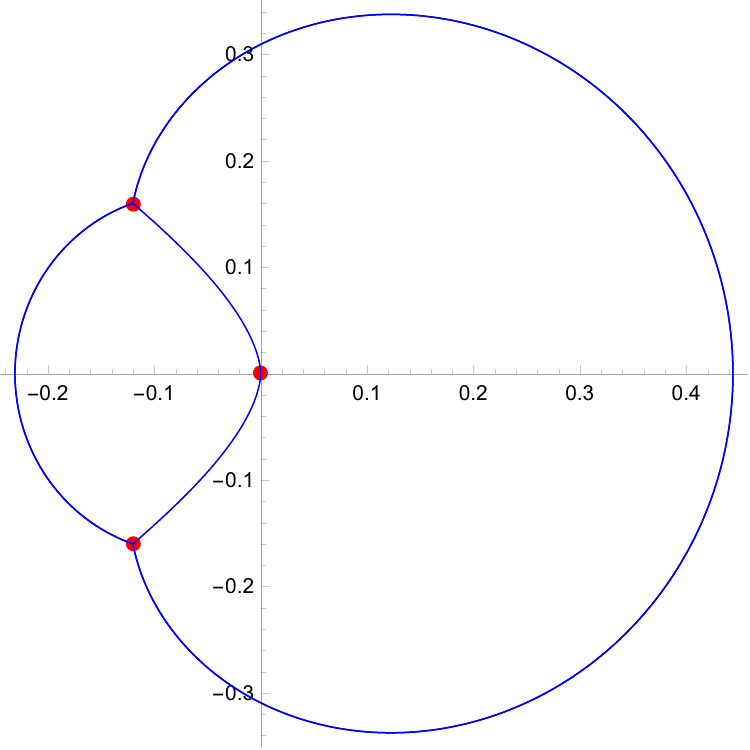}
\caption{$\Gamma_{3}.$}\label{3}%
\end{minipage}\hfill\begin{minipage}[b]{0.31\linewidth}
\centering \includegraphics[scale=0.25]{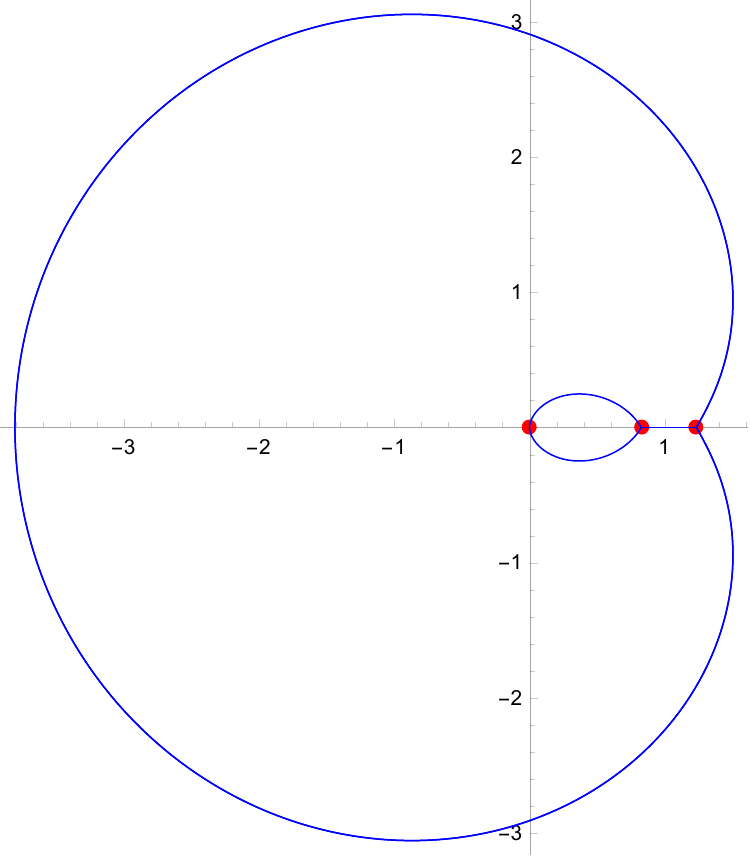}
\caption{$\Gamma_{-1.01.}$\label{4}}%
\end{minipage}
\begin{minipage}[b]{0.31\linewidth}
\centering \includegraphics[scale=0.25]{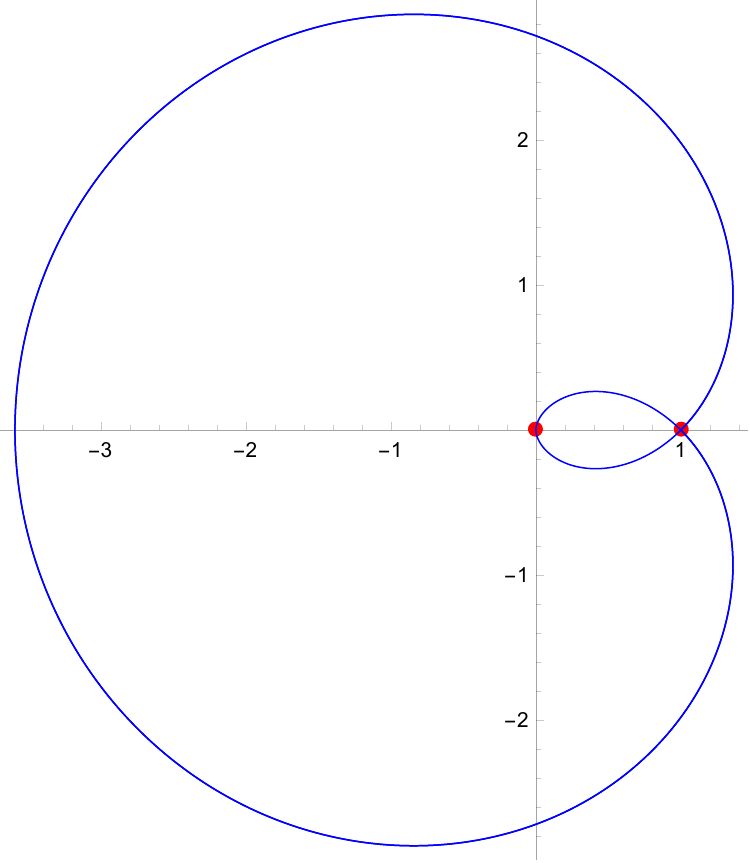}
\caption{$\Gamma_{-1}.$}\label{5}%
\end{minipage}\hfill\begin{minipage}[b]{0.5\linewidth}
\centering \includegraphics[scale=0.25]{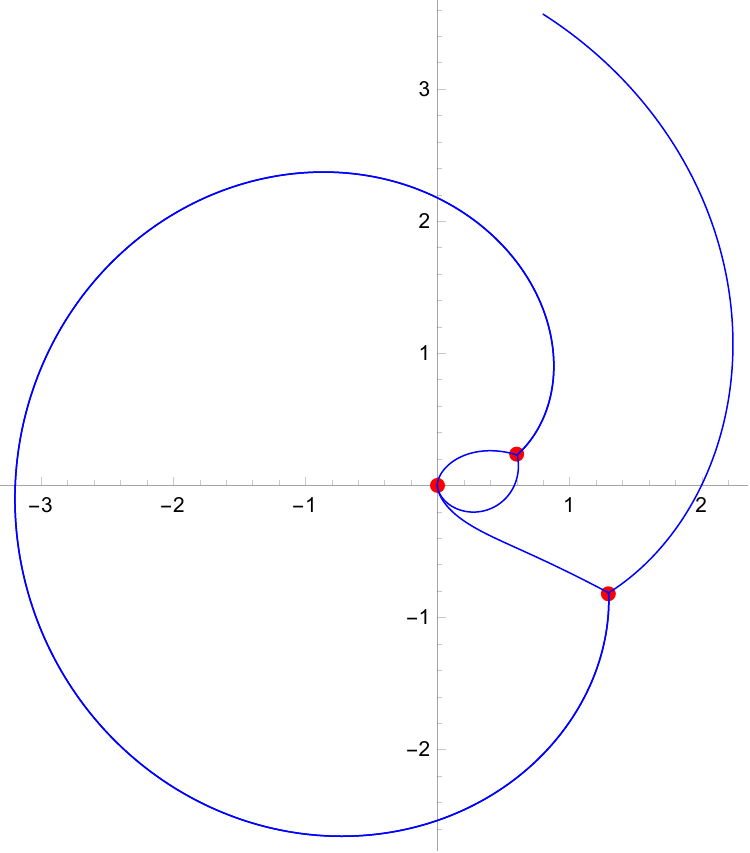}
\caption{$\Gamma_{-1+0.1i}.$}\label{6}%
\end{minipage}
\begin{minipage}[b]{0.3\linewidth}
\centering \includegraphics[scale=0.27]{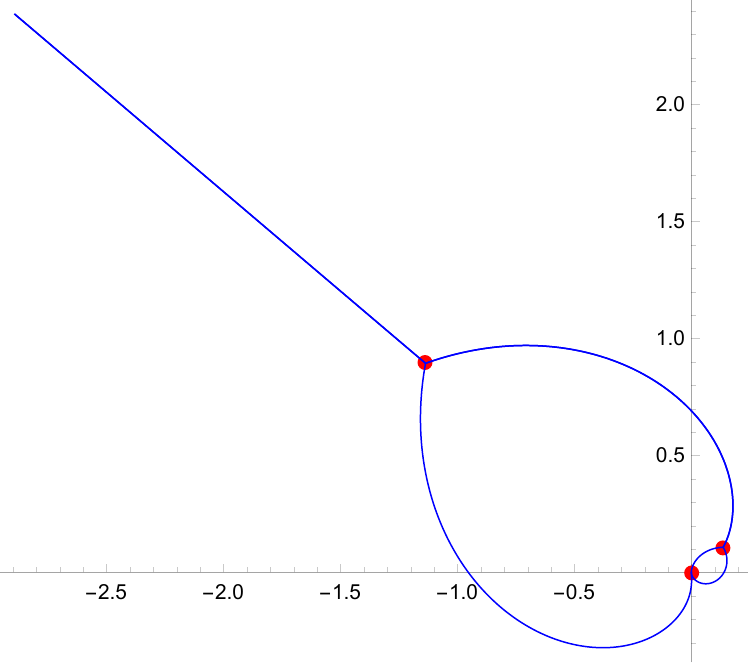}
\caption{$\Gamma_{-2+2i}.$}\label{7}%
\end{minipage}\hfill\end{figure}

The Cauchy transform $\mathcal{C}_{\mu}\left(  z\right)  $ of $\mu$ is given
by
\[
\mathcal{C}_{\mu}\left(  z\right)  =\frac{-\left(  Az+1\right)  +\sqrt
{D_{A}\left(  z\right)  }}{2z^{2}}.
\]
From Proposition \ref{first prop},\bigskip\ the branch of the square root
$\sqrt{D_{A}\left(  z\right)  }$ should be taken in $%
\mathbb{C}
\setminus\delta_{A},$ with normalization condition
\begin{equation}
\sqrt{D_{A}\left(  z\right)  }\sim(A+2)z,z\rightarrow\infty.
\label{Normalisation}%
\end{equation}
It follows from the behavior of $\mathcal{C}_{\mu}\left(  z\right)  $ at
$z=0:$
\[
\frac{-\left(  Az+1\right)  +\sqrt{D_{A}\left(  z\right)  }}{2z^{2}%
}=\allowbreak\left(  \frac{\sqrt{D_{A}\left(  0\right)  }}{2}-\frac{1}%
{2}\right)  \left(  \frac{1}{z^{2}}+\frac{A}{z}\right)  +\mathcal{O}\left(
1\right)  ,z\longrightarrow0\allowbreak,
\]
that
\begin{equation}
\sqrt{D_{A}\left(  0\right)  }=1. \label{sqrtof D in 0}%
\end{equation}
This condition gives a precious information about the homotopic class in $%
\mathbb{C}
\setminus\left\{  0\right\}  $ of the short trajectory $\delta_{A}.$ Indeed,
from the proof of Lemma \ref{real}, for any Jordan arc $\gamma$ connecting
$\zeta_{-}$ and $\zeta_{+}$ in $%
\mathbb{C}
\setminus\left\{  0\right\}  ,$ we have
\[
\int_{\gamma}\frac{\left(  \sqrt{D_{A}\left(  t\right)  }\right)  _{+}}{t^{2}%
}dt\in=\pm2i\pi\left\{
\begin{array}
[c]{c}%
1,\text{ \emph{if} }\sqrt{D_{A}\left(  0\right)  }=1\\
-A-1,\text{ \emph{if} }\sqrt{D_{A}\left(  0\right)  }=-1.
\end{array}
\right.
\]
More precisely, we have the following :

\begin{lemma}
Assume that zeros $\zeta_{-}$ and $\zeta_{+}$ of $D_{A}\left(  z\right)  $ are
not real, and let $\mathcal{J}_{A}$ be the set of all Jordan arcs connecting
them in $%
\mathbb{C}
\setminus\left\{  0\right\}  ,$ and not intersecting the semi-axis positive.
Then, for any Jordan arc $\gamma$ connecting $\zeta_{-}$ and $\zeta_{+}$ in $%
\mathbb{C}
\setminus\left\{  0\right\}  :$%
\[
\sqrt{D_{A}\left(  0\right)  }=\left\{
\begin{array}
[c]{c}%
1,\text{ \emph{if} }\gamma\in\mathcal{J}_{A}\\
-1,\text{ \emph{otherwise.}}%
\end{array}
\right.
\]
In particular, $\delta_{A}\in\mathcal{J}_{A}.$
\end{lemma}

\begin{proof}
Let $\gamma\in\mathcal{J}_{A}$ and consider the function
\[
g\left(  z\right)  =\frac{\sqrt{D_{A}\left(  z\right)  }}{\left(  A+2\right)
z+1}%
\]
defined and holomorphic in $%
\mathbb{C}
\setminus\left(  \gamma\cup\left\{  \frac{-1}{A+2}\right\}  \right)  ;$ in
particular, $g$ is continuous in $\left[  0,+\infty\right[  .$ Clearly,
$\sqrt{D_{A}\left(  0\right)  }\in\left\{  -1,1\right\}  .$ If $\sqrt
{D_{A}\left(  0\right)  }=-1,$ then with Condition (\ref{Normalisation}),
there exists $x>0,$ such that $\Re g\left(  x\right)  =0;$ which gives
$y\leq0,$ such that, $g^{2}\left(  x\right)  =y.$ Straightforward calculations
show that
\begin{equation}
\left(  A+2\right)  ^{2}x^{2}+2\left(  A-2\frac{y}{1-y}\right)  \allowbreak
x+1=0. \label{last}%
\end{equation}
Taking the real parts in (\ref{last}) and then the imaginary parts, we get the
contradiction
\[
\left(  \allowbreak\Im A\right)  ^{2}x+\frac{4}{1-y}=0.
\]
Thus, $\sqrt{D_{A}\left(  0\right)  }=1$ for $\gamma\in\mathcal{J}_{A}.$
\end{proof}

Generalized Laguerre polynomials $L_{n}^{\alpha_{n}}\left(  z\right)  $ are
given explicitly by (see \cite{szego}):
\begin{equation}
L_{n}^{(\alpha_{n})}(z)=\sum_{k=0}^{n}\binom{n+\alpha_{n}}{n-k}\frac{(-z)^{k}%
}{k!}, \label{laguerre1}%
\end{equation}
The asymptotic distribution of their zeros and asymptotic was studied in
\cite{Kuijlaars},\cite{weak},\cite{paper1}. The key feature in the study of
the weak asymptotic is the so-called Gonchar-Rakhmanov-Stahl (GRS) theory
\cite{gonchar},\cite{stahl}. The strong uniform asymptotic on the whole plane
are obtained by means of the Riemann--Hilbert steepest descent method of
Deift-Zhou \cite{deift}.

Generalized Bessel polynomials $B_{n}^{(\alpha)}$ are linked to re-scaled
generalized Laguerre polynomials by
\[
B_{n}^{(\alpha)}(z)=z^{n}L_{n}^{(-2n-\alpha+1)}\left(  \frac{2}{z}\right)
\,.
\]
The asymptotic distribution of their zeros and weak and strong asymptotic can
be derived by replacing $A\mapsto-(A+2)$ and $z\mapsto2/z.$ In particular, if
$A\notin%
\mathbb{R}
$ and $\gamma_{A}$ is the unique short trajectory of $\varpi_{A},$ then, the
sequence $\left(  \mu_{n}\right)  _{n}$ weakly converges to the measure $\mu$
absolutely continuous with respect to the arc-length measure :\bigskip\
\[
d\mu\left(  z\right)  =\frac{1}{2\pi i}\frac{\left(  \sqrt{D_{A}\left(
z\right)  }\right)  _{+}}{z^{2}}dz.
\]


\begin{acknowledgement}
\bigskip F.T has been partially supported by the research laboratory
\textquotedblleft Mathematics and Applications LR05ES14\textquotedblright%
\ from the Faculty of Sciences of Gabes, Tunisia. M.J.A would like to thank
the Deanship of Graduate Studies And Scientific Reasearch at Qassim University
for financial support (QU-APC-2025).
\end{acknowledgement}

\bigskip

\texttt{Mohamed Jalel Atia, Math Dept, College of Sciences, Qassim }

\texttt{Universty, Buraydah 51452. Saudi Arabia.}

\texttt{E-mail adress: M.Attia@qu.edu.sa}

\bigskip

\texttt{Faouzi Thabet, Higher Institute of Computer Science, Djerba Road Km }

\texttt{3, BP 283 Medenine 4100. Tunisia.}

\texttt{E-mail adress: faouzithabet@yahoo.fr}

\end{document}